\newtheorem{thm}{Theorem}
\newtheorem{problem}{Problem}
\newtheorem{prop}[thm]{Proposition}
\newtheorem{cor}[thm]{Corollary}
\newtheorem{defn}[thm]{Definition}
\newtheorem{lem}[thm]{Lemma}
\newtheorem{rem}[thm]{Remark}
\newtheorem{exm}[thm]{Example}
\newcommand{\E}{\mathcal{E}}
\newcommand{\G}{\mathcal{G}}
\newcommand{\Hy}{\mathcal{H}}
\newcommand{\W}{\mathcal{W}}
\newcommand{\bF}{\mathbb{F}}
\newcommand{\V}{\mathcal{V}}
\newcommand{\Z}{\mathbb{Z}}
 \xdef\macro@boxdim@YT{\expandonce@YT\boxdim@normal@YT}%
 \xdef\macro@boxdim@YT{#1}%
\def\set@mathmode@YT{
 \gdef\skipin@YT{$}
 \gdef\skipout@YT{$}
 \def\smallfont@YT{\scriptstyle} }
\let\@fnsymbol\@arabic
\date{}
\title{$L(2,1)$-labeling of some zero-divisor graphs associated with commutative rings}
\author {Rameez Raja$^{1}$ Annayat Ali$^{2}$ \footnote {$^{, 2}$ Department of Mathematics, National Institute of Technology Srinagar, Jammu and Kashmir, India.  Email: annayat\_05phd20@nitsri.net, rameeznaqash@nitsri.ac.in, Corresponding Author $^1$}}
\begin{document}
\maketitle
\begin{abstract} 
Let $\G = (\V, \E)$ be a simple graph, an $L(2,1)$-labeling of $\G$ is an assignment of labels from non-negative integers to vertices of $\G$ such that adjacent vertices get labels which differ by at least by two, and vertices which are at distance two from each other get different labels. The $\lambda$-number of $\G$, denoted by $\lambda(\G)$, is the smallest positive integer $\ell$ such that $\G$ has an $L(2,1)$-labeling with all labels as  members of the set $\{ 0, 1, \dots, \ell \}$. The zero-divisor graph of a finite commutative ring $R$ with unity, denoted by $\Gamma(R)$, is the simple graph whose vertices are all zero divisors of $R$ in which two vertices $u$ and $v$ are adjacent  if and only if $uv = 0$ in $R$. In this paper, we investigate $L(2,1)$-labeling of some  zero-divisor graphs. We study the \textit{partite truncation}, a graph operation that allows us to obtain a reduced graph of relatively small order from a graph of significantly larger order. We establish the relation between  $\lambda$-numbers of the graph  and its partite truncated one. We make use of the operation \textit{partite truncation} to contract the zero-divisor graph of a reduced ring to the zero-divisor graph of a Boolean ring.
\end{abstract}
\textbf{Keywords:}
 Zero-divisor graph, $L(2,1)$-labeling, $\lambda$-number, partite truncation.\\

\textit{2010 AMS Classification Code}: 05E40, 13A99.
   
\section{Introduction}
Given a simple undirected finite graph $\G$, and two positive integers $j, k$, an $L(j,k)$-labeling of $\G$ is a function $f: V(\G)\longrightarrow \Z_{\geq0}$ such that $|f(v_1) - f(v_2)| \geq j$ whenever $v_1$ is adjacent to $v_2$ and $|f(v_1)-f(v_2)| \geq k$ whenever $v_1$ and $v_2$  are at distance two apart. The difference between the maximum and minimum values of $f$ is called \textit{span} of $f$, denoted by $span(f)$. If $f$ is an $L(j,k)$-labeling of $\G$ with minimum value say $\delta$, then the function $g$ defined on vertices of $\G$ by $g(v) =f(v) - \delta$ is also an  $L(j,k)$-labeling with minimum value $0$ and the maximum value as $span(f) = span(g)$. As a result, we assume that every  $L(j,k)$-labeling has  $0$ as its minimum value and its span as the maximum value. The minimum span over all $L(j,k)$-labelings  of $\G$ is called the $L(j,k)$-labeling number, denoted by, $\lambda_{j,k}(\G)$.  An $L(j,k)$-labeling  $f$  of $\G$ is  said to be \textit{minimal labeling} if $span(f)= \lambda_{j,k}(\G)$. The $L(j,k)$-labeling problem has been studied extensively for the case $j=2$ and $k=1$. The $L(2,1)$-labeling number of a graph $\G$ is called its \textit{$\lambda$-number}, denoted by $\lambda(\G)$.

The difficulty of assigning channel frequencies to transmitters without interference motivates the study of $L(j,k)$-labeling problem of a graph. Roberts \cite{1} proposed the challenge of efficiently assigning radio channels to transmitters at several sites using non-negative integers to denote channels so that adjacent locations receive distinct channels, and extremely close locations receive channels that are at least at differ by two. As a result, there would be no interference between these channels. In 1992, Griggs and Yeh \cite{2} formalised the concept of $L(j,k)$-labeling and demonstrated that the $L(2,1)$-labeling problem is NP-complete for general graphs. The $L(j,k)$-labeling problem and $L(2,1)$-labeling problem have been studied in    \cite{3,4,5,6}.

On the other hand, the fundamental objective of associating a graph to an algebraic structure is to investigate the relationship between algebraic properties and combinatorial properties of algebraic structures. The graphs such as Cayley graph \cite{7}, power graph \cite{8}, zero-divisor graph \cite{9}, group-annhilator  \cite{10} and  torsion graph \cite{11}, are some graphs emerging from algebraic structures such as groups, rings and modules. Beck \cite{12} introduced the concept of an undirected zero-divisor graph $\Gamma^{\prime}(R)$ which is called \textit{Beck's zero-divisor graph} of a commutative ring $R$. In his investigation, all  elements of a ring $R$ are vertices of the graph $\Gamma^{\prime}(R)$ with two distinct vertices $u$ and $v$  adjacent in $\Gamma^{\prime}(R)$ if and only if $uv = 0$. Anderson and Livingston  \cite{9}  also studied the combinatorial properties of a commutative ring $R$. They associated a graph $\Gamma(R)$, called the \textit{zero-divisor graph}, to $R$ with vertices as elements of $Z^*(R) = Z(R)\setminus \{0\}$, that is, the non-trivial zero-divisors of $R$ with two vertices $u, v\in Z^*(R)$ being adjacent in $\Gamma(R)$ if and only if $uv = 0$. For more on zero-divisor graphs, please see \cite{ALS,B,FL,M,PBT,RR,Rd}. For basic definitions from graph theory we refer \cite{P}.

Let $R$ be a finite commutative ring with unity. A ring  $R$ is said to be a \textit{local ring} if it has a unique maximal ideal. Let  $R =R_1\times \dots \times R_r \times F_1 \times \dots \times F_s$,  be the Artinian decomposition of $R$,  where each $R_i$ is a commutative local ring with unity, and each $F_j$ is a field. A ring $R$ is said to be a \textit{mixed ring} if either $r\geq1$ and $s\geq1$ or $r\geq 2$ and $s = 0$ in its Artinian decomposition. A ring $R$ is said to be a \textit{reduced ring} if $r =0$ and $s \geq 1$ in its Artinian decomposition.

This research article is organised as follows. In Section 2, we present some preliminary results and definitions related to $L(2,1)$-labeling of graphs. In Section 3, we study \textit{partite truncation} operation in graphs, and obtain exact value of the $\lambda$-number of zero-divisor graphs realized by some classes of reduced rings. 

\section{Preliminaries}
In this section, we discuss some results related to \textit{$L(2,1)$-labeling} of some classes of graphs.

For a positive real number $d$, an \textit{$L_d(2,1)$-labeling} of a simple graph $\G$ is a positive real valued function $f$ defined on $V(\G)$ such that for $u_1, u_2\in V(\G), |f(u_1) - f(u_2)| \geq 2d$ whenever $u_1$ is adjacent to $u_2$ and $ |f(u_1) - f(u_2)| \geq d$ whenever distance between $u_1$ and $u_2$ is two. The $L_d(2,1)$-labeling number of $\G$, denoted by $\lambda(\G,d)$, is the smallest number $m$ such that $\G$ has an $L_d(2,1)$-labeling $f$ with $\max\limits_{v \in V(\G)}\{f(v)\} = m$. $L_d(2,1)$-labeling introduced by Griggs and Yeh \cite{2} is a natural generalisation of $L(2,1)$-labeling. Note that $\lambda(\G,1)=\lambda(\G)$. The authors confirmed that to determine $\lambda(\G,d)$, it suffices to study the case for $\lambda(\G)$. They provided bounds on $\lambda$-number for a variety of graphs, including trees, cycles, $3$-connected graphs and hypercubes. They proved following interesting results in which they obtain upper bounds of $\lambda(\G)$ in terms of graph invariants such as chromatic number $\chi(\G)$ and maximum degree $\Delta(\G)$.
\begin{thm}[\cite{2}, Theorem 4.1]
If $\G$ is a graph with $n$ vertices, then $\lambda(\G)\leq  n + \chi(\G) - 2$.
\end{thm}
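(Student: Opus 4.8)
The plan is to construct an explicit $L(2,1)$-labelling whose span is at most $n + \chi(\G) - 2$ directly from a proper colouring of $\G$. Write $k = \chi(\G)$ and fix a proper $k$-colouring, so that $V(\G)$ is partitioned into independent sets (colour classes) $V_1, \dots, V_k$ with $|V_i| = n_i$ and $\sum_{i=1}^k n_i = n$. The reason for working with a proper colouring is that every edge of $\G$ joins vertices lying in two \emph{different} classes, so it will be enough to keep the blocks of labels assigned to different classes suitably separated.

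First I would list the classes in any fixed order and assign labels block by block: give the $n_1$ vertices of $V_1$ the labels $0, 1, \dots, n_1 - 1$ (in any order inside the class), leave the next integer unused as a ``gap'', then give the vertices of $V_2$ the next $n_2$ consecutive integers, leave one gap, and continue. In closed form, a vertex of $V_i$ receives a label lying in an interval of length $n_i$ starting at $\big(\sum_{l<i} n_l\big) + (i-1)$. Since the blocks are pairwise disjoint and each block consists of distinct integers, \emph{all $n$ labels are distinct}; in particular any two vertices at distance two in $\G$ automatically get different labels, which is the distance-two requirement of the $L(2,1)$ condition.

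Next I would verify the ``difference at least $2$'' requirement across an edge $uv$. Its endpoints lie in distinct classes, say $u \in V_i$ and $v \in V_j$ with $i < j$; then the smallest label available to $V_j$ exceeds the largest label available to $V_i$ by at least $(j-1) - (i-2) = j - i + 1 \ge 2$, because between block $i$ and block $j$ one has inserted $j - i \ge 1$ gaps. Hence $|f(u) - f(v)| \ge 2$, so $f$ is a valid $L(2,1)$-labelling. Finally, the largest label used is the top of the block for $V_k$, namely $\big(\sum_{l=1}^k n_l\big) + (k-1) - 1 = n + k - 2 = n + \chi(\G) - 2$, which yields $\lambda(\G) \le n + \chi(\G) - 2$.

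There is no genuine obstacle here; the only points requiring care are the bookkeeping showing that a single unused integer between consecutive blocks already forces a gap of at least $2$ across \emph{every} edge, including edges between non-consecutive colour classes, and checking that the degenerate case $\chi(\G) = 1$ (an edgeless graph) is consistent, since then the construction uses exactly the labels $0, \dots, n-1$ and the bound reads $\lambda(\G) \le n-1$.
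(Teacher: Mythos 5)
Your construction is correct: labelling the colour classes of a proper $\chi(\G)$-colouring in consecutive blocks separated by one unused integer makes all labels distinct (handling the distance-two condition) and forces a difference of at least $2$ across every edge, with top label $n+\chi(\G)-2$. The paper itself states this result only as a quoted preliminary from Griggs and Yeh without reproducing a proof, and your argument is essentially the standard one from that source, so there is nothing to reconcile.
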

\begin{thm}[\cite{2}, Theorem 6.2]
For any graph $\G$, $\lambda(\G)\leq \Delta^2(\G) + 2\Delta(\G)$.
\end{thm}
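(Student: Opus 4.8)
The plan is to prove the bound by a direct greedy labelling argument. Write $\Delta = \Delta(\G)$ and set $k = \Delta^2 + 2\Delta$; I will show that $\G$ admits an $L(2,1)$-labelling using only labels from the set $\{0, 1, \ldots, k\}$, which has $k + 1 = \Delta^2 + 2\Delta + 1$ elements. Since such a labelling has span at most $k$, this yields $\lambda(\G) \leq \Delta^2 + 2\Delta$.

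First I would fix an arbitrary enumeration $v_1, v_2, \ldots, v_n$ of $V(\G)$ and build the labelling $f$ one vertex at a time, choosing $f(v_i)$ after $f(v_1), \ldots, f(v_{i-1})$ are already fixed and insisting only on the $L(2,1)$ constraints relative to the already-labelled vertices. The key step is to bound, at stage $i$, the number of labels in $\{0, 1, \ldots, k\}$ that are unavailable for $v_i$. A previously labelled vertex $u$ constrains $v_i$ only when it lies at distance $1$ or $2$ from $v_i$. Each neighbour $u$ of $v_i$ — and there are at most $\Delta$ of these — forbids the three values $f(u) - 1$, $f(u)$, $f(u) + 1$, hence at most $3\Delta$ labels in total. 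Each vertex $u$ at distance exactly $2$ from $v_i$ forbids only the single value $f(u)$; since every neighbour of $v_i$ has at most $\Delta - 1$ neighbours other than $v_i$, there are at most $\Delta(\Delta - 1)$ such vertices, contributing at most $\Delta^2 - \Delta$ forbidden labels.

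Combining the two contributions via a union bound, the number of labels unavailable to $v_i$ is at most $3\Delta + (\Delta^2 - \Delta) = \Delta^2 + 2\Delta = k$. Because $\{0, 1, \ldots, k\}$ contains $k + 1$ labels, at least one label survives, and I assign it to $v_i$. Iterating through all vertices produces a legitimate $L(2,1)$-labelling with all labels in $\{0, 1, \ldots, \Delta^2 + 2\Delta\}$, completing the proof.

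I do not anticipate a real obstacle: the whole argument is a union bound over the two types of constraining vertices. The one place that wants mild care is the count of vertices at distance exactly $2$, where one must avoid counting $v_i$ itself; the crude estimate $\Delta(\Delta - 1)$ is a safe overcount, and the union bound happily tolerates the overlap between the distance-$1$ and distance-$2$ forbidden sets. It is worth noting that this greedy estimate is wasteful — it ignores, for instance, that the windows $\{f(u) - 1, f(u), f(u) + 1\}$ for distinct neighbours frequently overlap — and that sharper bounds (of order $\Delta^2 + \Delta$, and $\Delta^2$ for large $\Delta$) are known but need considerably more work; the clean bound $\Delta^2 + 2\Delta$ is the one naturally recorded here.
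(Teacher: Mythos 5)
Your greedy argument is correct: the count of at most $3\Delta$ forbidden labels from neighbours plus at most $\Delta(\Delta-1)$ from vertices at distance two leaves a free label in $\{0,1,\ldots,\Delta^2+2\Delta\}$, and since the $L(2,1)$ constraints are symmetric, enforcing them only against previously labelled vertices suffices. The paper states this result as a cited preliminary (Griggs and Yeh) without proof, and your argument is essentially the standard proof from that source, so there is nothing to fault here.
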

The authors in \cite{4} introduced the notion of holes, multiplicities and gaps of an $L(2,1)$-labeling. Let $f$ be an $L(2,1)$-labeling of $\G$, and  given a positive integer $h$, $0 < h < span(f)$, let $f_h=\{v\in V(\G): f(v) = h\}$. If $f_h$ is empty, then $h$ is called a \textit{hole} of $f$ and if $|f_h|\geq2$, then $h$ is called \textit{multiplicity} of $f$. $h$ is called a \textit{gap} of $f$ if $h$ is a hole with $|f_{h-1}| = |f_{h+1}| = 1$ and $\{v^{h-1},v^{h+1}\}\in E(\G)$, where $f(v^{h-1}) = h-1$ and  $f(v^{h+1}) = h+1$.  $H(f), M(f)$ and $G(f)$  represents collection of holes, multiplicities and gaps of $f$ respectively. Let $h(f)$ and $g(f)$ denote cardinalities of $H(f)$ and $G(f)$. The function $f$ is called the \textit{minimum $L(2,1)$-labeling}  of $\G$ if it is minimal and has minimum number of holes over the set $L_{(2,1)}(\G)$, where $L_{(2,1)}(\G)$ denotes the set of all minimal $L(2,1)$-labelings of $\G$. It should be noted that a minimal $L(2,1)$-labeling of a graph $\G$ is not unique whereas the minimum $L(2,1)$-labeling of a graph $\G$ is always unique. For example, if $P_6$ is a path on six vertices, then the labeling $(4,0, 3, 1, 5, 2)$ with span as 5 is an $L(2,1)$-labeling but not the minimum one, since it is not a minimal $L(2,1)$-labeling for $P_6$. 

 The graph invariant called as \textit{path covering number} is the least number of vertex-disjoint paths required to cover vertices of the graph. The relationship between $\lambda(\G)$ and path covering number $c(\G)$ of a graph $\G$ proved in \cite{4} is given as follows.
\begin{lem}[\cite{4}, Lemma 2.2]  Let $f$ be a minimum $L(2,1)$-labeling of $\G$. If $h$ is a hole of $f$, then $|f_{h-1}|=|f_{h+1}|>0$. Furthermore, if $|f_{h-1}|=|f_{h+1}|=1$, then $h$ is a gap.
\end{lem}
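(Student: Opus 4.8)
The plan is to prove the lemma with two local relabelling moves — \emph{sliding everything above the hole down by one} and \emph{promoting an isolated vertex into the hole} — together with one structural fact: two vertices of $\G$ carrying the same label of an $L(2,1)$-labelling lie at distance at least three (distance one would force a label gap of at least two, distance two would force distinct labels). Throughout, let $s$ be the span of $f$ and write $f_i=\{v:f(v)=i\}$ for $0\le i\le s$; here ``minimal'' means $f$ has the fewest holes among all members of $L_{(2,1)}(\G)$.

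\textbf{Step 1: the sliding move.} Given a hole $h$, define $\hat f(v)=f(v)$ if $f(v)<h$ and $\hat f(v)=f(v)-1$ if $f(v)>h$ (no vertex lies at level $h$). A routine check shows $\hat f$ is again an $L(2,1)$-labelling \emph{provided no edge of $\G$ joins $f_{h-1}$ to $f_{h+1}$}: pairs on the same side of $h$ keep their label difference; among adjacent ``low--high'' pairs the difference drops by one, landing on $1$ exactly when the pair forms an edge between $f_{h-1}$ and $f_{h+1}$; and a distance-two ``low--high'' pair stays separated since a low level stays strictly below a shifted high level. Comparing level sets, the holes of $\hat f$ are precisely the holes of $f$ other than $h$ (those below $h$ unchanged, those above $h$ shifted down by one), so $\hat f$ has exactly one hole fewer than $f$. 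Since minimality of $f$ forbids this, we obtain the key fact $(\star)$: \emph{for every hole $h$, some edge of $\G$ joins $f_{h-1}$ to $f_{h+1}$}. As an edge has two endpoints, $(\star)$ gives $|f_{h-1}|,|f_{h+1}|>0$; and if $|f_{h-1}|=|f_{h+1}|=1$ then the edge in $(\star)$ is exactly $v^{h-1}v^{h+1}$, which is the only requirement beyond being a hole in the definition of a gap, so $h$ is a gap.

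\textbf{Step 2: the equality $|f_{h-1}|=|f_{h+1}|$.} Put $A=f_{h-1}$, $C=f_{h+1}$, and let $B$ be the bipartite graph on $A\cup C$ whose edges are the edges of $\G$ between $A$ and $C$. First, $B$ has no isolated vertex: if $u\in A$ had no neighbour in $C$, then relabelling $u$ by $h$ would again be an $L(2,1)$-labelling — its $\G$-neighbours lie at levels $\le h-3$ or $\ge h+2$, it is non-adjacent to the rest of $A$, and level $h$ was empty — and that labelling would have one hole fewer unless $|A|=1$; but if $|A|=1$ the edge in $(\star)$ is incident to $u$, contradicting that $u$ is isolated in $B$. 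Symmetrically no vertex of $C$ is isolated in $B$. Now, since the vertices of $A$ share the label $h-1$, they lie pairwise at distance at least three in $\G$ and hence have pairwise disjoint neighbourhoods, in particular pairwise disjoint neighbourhoods within $C$; likewise the vertices of $C$ have pairwise disjoint neighbourhoods within $A$. A bipartite graph with no isolated vertex in which the neighbourhoods on each side are pairwise disjoint must be a disjoint union of single edges — a vertex of $A$ with two neighbours would place those neighbours in two non-disjoint neighbourhoods on the $C$-side — hence a perfect matching, so $|A|=|C|$.

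\textbf{The expected obstacle.} I anticipate Step 2 to be the crux, and within it the observation that ``same label forces distance at least three'' collapses the a priori arbitrary bipartite adjacency between $f_{h-1}$ and $f_{h+1}$ into a perfect matching; absent that observation one is tempted to equalise the two sizes by iterating relabellings, with no evident termination. The validity verifications for the two relabelling moves and the bookkeeping of holes are routine, and I would keep them brief.
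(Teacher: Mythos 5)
Your proposal is correct, but note that the paper itself offers no proof to compare against: this lemma is imported verbatim from reference [4] (Georges--Mauro--Whittlesey) and stated in the preliminaries without argument. Judged on its own, your proof is complete and sound. The two relabelling moves are verified correctly: the slide past an empty level $h$ only threatens adjacent pairs labelled $h-1$ and $h+1$, so hole-minimality yields your fact $(\star)$ that every hole is ``spanned'' by an edge of $\G$ between $f_{h-1}$ and $f_{h+1}$, which already gives nonemptiness and the ``furthermore'' clause; and the promotion of a vertex of $f_{h-1}$ (or $f_{h+1}$) with no neighbour across the hole is legitimately blocked either by hole-minimality or, in the $|f_{h\pm1}|=1$ case, by $(\star)$ itself. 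The crux you identified is indeed the right one: since equally labelled vertices are at distance at least three, the bipartite graph between $f_{h-1}$ and $f_{h+1}$ has pairwise disjoint neighbourhoods on each side, and with no isolated vertices this forces a perfect matching, hence $|f_{h-1}|=|f_{h+1}|$; this is in the spirit of the original argument in [4]. One small point worth recording: the paper defines ``minimal'' as fewest holes over all of $L_{(2,1)}(\G)$, whereas [4] works with span-$\lambda(\G)$ labellings having fewest holes; your argument is robust to either reading, since the promotion move preserves the span and the slide strictly decreases it (which under the span-$\lambda$ reading is impossible outright).
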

\begin{lem} [\cite{4}, Lemma 2.3] If $f$ is a  minimum $L(2,1)$-labeling of $\G$, then $G(f)$ is empty or $M(f)$ is empty. 
\end{lem}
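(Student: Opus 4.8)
The plan is to argue by contradiction, using the principle that a minimum labelling admits no local ``hole‑reducing move.'' Suppose $f$ is a minimum $L(2,1)$-labelling of $\G$ with span $s$, and suppose for contradiction that $f$ has both a gap $h$ and a multiplicity.

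\textbf{Step 1 (a no‑exchange lemma).} I would first record the following: if $u$ is a vertex whose label $f(u)$ is a multiplicity of $f$, and $h'$ is a hole of $f$ such that no neighbour of $u$ carries a label in $\{h'-1,h'+1\}$, then reassigning $u$ the label $h'$ yields a valid $L(2,1)$-labelling of the same span with one fewer hole, contradicting minimality. (The adjacency condition at $u$ holds by the hypothesis on $h'$; the distance‑two condition at $u$ is automatic because $h'$ is unused by $f$; and the span is unchanged since $f(u)$, being a multiplicity, is still attained.) Hence every vertex lying in a multiplicity‑layer of $f$ is, for \emph{every} hole $h'$, adjacent to some vertex labelled $h'-1$ or $h'+1$.

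\textbf{Step 2 (rigidity near the gap).} Apply Step 1 with $h'=h$. Since $h$ is a gap, $|f_{h-1}|=|f_{h+1}|=1$ and $v^{h-1}\sim v^{h+1}$, so every multiplicity‑layer vertex is adjacent to $v^{h-1}$ or to $v^{h+1}$. But the neighbourhood of any vertex is a set of vertices pairwise at distance at most two, hence injectively labelled by $f$; so at most one vertex of a given multiplicity‑layer is adjacent to $v^{h-1}$, and at most one to $v^{h+1}$. Therefore every multiplicity‑layer of $f$ consists of exactly two vertices, say $u_1,u_2$ with $u_1\sim v^{h-1}$ and $u_2\sim v^{h+1}$; moreover $u_1\not\sim v^{h+1}$ and $u_2\not\sim v^{h-1}$ (else $u_1,u_2$ would be identically‑labelled vertices in one neighbourhood), and chasing distances along the edge $v^{h-1}v^{h+1}$ forces the common label $m$ of $u_1,u_2$ to satisfy $|m-h|\ge 3$.

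\textbf{Step 3 (the contradiction; main obstacle).} The multiplicity structure of $f$ is now very rigid: each multiplicity is a single doubled value $m$ with $|m-h|\ge3$, attained by one vertex attached only to the $v^{h-1}$‑end and one attached only to the $v^{h+1}$‑end of the bridge $v^{h-1}v^{h+1}$. From here I would build a relabelling $g$ with exactly the same number of holes as $f$ — hence still a minimum labelling — but in which $h$ is a hole with $|g_{h-1}|=|g_{h+1}|=1$ whose two flanking vertices are \emph{not} adjacent, contradicting the preceding lemma (\cite{4}, Lemma 2.2), which forces such a hole to be a gap. The intended modification is to ``rotate'' a doubled label $m$ toward one flank of the gap while sliding $v^{h-1}$ (or $v^{h+1}$) into the slot thereby freed at $m$, using $u_1\not\sim v^{h+1}$, $u_2\not\sim v^{h-1}$ and $|m-h|\ge3$ to control the $L(2,1)$-conditions near $h$, and reducing the orientation $m>h$ to $m<h$ via the span‑reversal symmetry $\ell\mapsto s-\ell$ (which preserves minimum labellings, gaps and multiplicities). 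The delicate point I expect to be the real obstacle is verifying that such a rotation is $L(2,1)$-admissible against \emph{all} neighbours of the moved vertices — not just against $v^{h\pm1}$ — which is where one must feed in the full content of Lemma 2.2 (equal sizes of the two layers flanking every hole) together with Step 1 to pin down which labels can occur in those neighbourhoods; if the naive one‑step rotation fails, the fallback is to iterate it along a chain of consecutive layers.
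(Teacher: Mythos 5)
The paper itself does not prove this statement: it is quoted verbatim from Georges--Mauro--Whittlesey \cite{4} (their Lemma 2.3) as a preliminary, so there is no in-paper proof to compare against; your attempt has to stand on its own. Your Steps 1 and 2 do stand: the no-exchange argument (a vertex in a multiplicity layer that avoids the labels $h'-1,h'+1$ on its neighbourhood could be moved into the hole $h'$, reducing the hole count at constant span) is correct, and the resulting rigidity is correctly derived --- every multiplicity layer has exactly two vertices $u_1\sim v^{h-1}$, $u_2\sim v^{h+1}$, with $u_1\not\sim v^{h+1}$, $u_2\not\sim v^{h-1}$ and common label $m$ satisfying $|m-h|\geq 3$.

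The genuine gap is Step 3, which you yourself flag as unresolved: it is a plan for a contradiction, not a proof, and the naive instantiations of the proposed ``rotation'' actually fail. You cannot slide $v^{h-1}$ or $v^{h+1}$ into the gap $h$ (the edge $v^{h-1}v^{h+1}$ forces a difference of $1$); you cannot move $u_1$ or $u_2$ to any of $h-1,h,h+1$ (each move is blocked either by adjacency to its flank or by the distance-two path through the edge $v^{h-1}v^{h+1}$); and relabelling $v^{h+1}$ to $m$ is blocked because $u_1$ sits at distance two from $v^{h+1}$ with label $m$. So any admissible modification must be a compound move whose validity depends on the labels occurring on the \emph{other} neighbours of the relocated vertices, about which Steps 1--2 give only partial information; in addition, your intended contradiction (a hole flanked by non-adjacent singletons, against Lemma 2.2) requires the modified labelling to keep exactly the same number of holes, which the unspecified rotation does not guarantee. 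Until a concrete relabelling is exhibited and checked against all neighbours (or a different global argument is given), the lemma is not proved; the ``iterate along a chain of layers'' fallback is not developed enough to close this.
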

\begin{thm}[\cite{4}, \mbox{Theorem 1.1}]
Let $\G$ be a simple graph of order n, and  let $\G^c$ be its complement. Then the following hold,\\
(i) $\lambda(\G)\leq n-1$ if and only if  $c(\G^c) = 1$.\\
(ii) Let $r\geq2$ be an integer. Then $\lambda(\G) = n+r-2$ if and only if $c(\G^c) = r$.

\end{thm}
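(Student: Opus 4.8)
The plan is to reduce both parts to a single two-sided estimate: $\lambda(\G)\le n+c(\G^{c})-2$ (to be read as $\lambda(\G)\le n-1$ when $c(\G^{c})=1$), together with $c(\G^{c})\le\max\{1,\lambda(\G)-n+2\}$. Granting these, (i) and (ii) are pure arithmetic. If $c(\G^{c})=1$ the first bound gives $\lambda(\G)\le n-1$; conversely $\lambda(\G)\le n-1$ forces $c(\G^{c})\le 1$ in the second bound, hence $c(\G^{c})=1$ since $c(\G^{c})\ge 1$ always. For $r\ge 2$, $c(\G^{c})=r$ gives $\lambda(\G)\le n+r-2$ from the first bound and $\lambda(\G)\ge n+r-2$ from the second, so $\lambda(\G)=n+r-2$; and conversely $\lambda(\G)=n+r-2$ forces $c(\G^{c})\le r$ in the second bound and $c(\G^{c})\ge r$ in the first, so $c(\G^{c})=r$.

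First I would prove the upper bound by turning a path cover of $\G^{c}$ into a labelling of $\G$. Take an optimal vertex-disjoint path cover $P_{1},\dots,P_{r}$ of $\G^{c}$; consecutive vertices of each $P_{i}$ are non-adjacent in $\G$. Assign to $P_{1}$ the consecutive integers $0,1,\dots,|V(P_{1})|-1$, leave exactly one integer unused, assign the next block of consecutive integers to $P_{2}$, leave one more unused, and so on. All labels are distinct, so the distance-two condition is vacuous; two consecutive vertices within a block differ by $1$ but are non-adjacent in $\G$; and the largest label on $P_{i}$ differs by $2$ from the smallest label on $P_{i+1}$. Hence this is an $L(2,1)$-labelling with span $n+(r-1)-1=n+r-2$, equal to $n-1$ when $r=1$, so $\lambda(\G)\le n+c(\G^{c})-2$.

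For the reverse inequality I would start from a minimal $L(2,1)$-labelling $f$ of $\G$ with span $\lambda(\G)$ and study its levels $f^{-1}(i)$. Two observations drive everything: $|f(u)-f(v)|\le 1$ implies $uv\in E(\G^{c})$, and two vertices with the same label lie at distance $\ge 3$ in $\G$ and so are non-adjacent; thus each nonempty level is a clique of $\G^{c}$, consecutive nonempty levels are completely joined in $\G^{c}$, and consequently every maximal block of consecutive nonempty levels spans a subgraph of $\G^{c}$ possessing a Hamiltonian path (concatenate a Hamiltonian path of each level-clique, crossing between consecutive levels through the complete bipartite graph joining them, with the endpoints of the block's Hamiltonian path freely chosen in the bottom and top levels). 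Now invoke the dichotomy lemma: $G(f)=\emptyset$ or $M(f)=\emptyset$. If $M(f)=\emptyset$, every used label is a singleton, so by the hole-structure lemma each hole is an isolated gap, the blocks are genuine paths of $\G^{c}$, and there are $h(f)+1$ of them; since $\lambda(\G)\ge n-1$ here, $h(f)+1=\lambda(\G)-n+2=\max\{1,\lambda(\G)-n+2\}$, giving a path cover of $\G^{c}$ of the required size. If $M(f)\neq\emptyset$ then $G(f)=\emptyset$, and by the hole-structure lemma every hole $h$ has $|f^{-1}(h-1)|=|f^{-1}(h+1)|\ge 2$ (equality from the lemma; the common value is not $1$, else $h$ would be a gap). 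Since same-label vertices are at distance $\ge 3$ in $\G$, the bipartite subgraph of $\G$ joining $f^{-1}(h-1)$ to $f^{-1}(h+1)$ has maximum degree at most $1$; hence from every vertex of $f^{-1}(h-1)$ there is a non-edge of $\G$ --- a \emph{bridge} edge of $\G^{c}$ --- reaching $f^{-1}(h+1)$, and symmetrically. Splicing the blocks' Hamiltonian paths through these bridges --- entering each block at the vertex delivered by the incoming bridge and leaving it at a top-level vertex, all of which carry outgoing bridges --- assembles a single Hamiltonian path of $\G^{c}$, so $c(\G^{c})=1\le\max\{1,\lambda(\G)-n+2\}$.

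The technical heart is the case $M(f)\neq\emptyset$: one must check that the ``enter-where-forced, leave-where-convenient'' assembly along the chain of blocks never gets stuck, and this is exactly where the equalities $|f^{-1}(h-1)|=|f^{-1}(h+1)|\ge 2$ (so that a block consisting of a single nonempty level still offers two distinct usable endpoints), the matching structure of the $\G$-bipartite graphs between consecutive levels, and minimality of $f$ via the two preceding lemmas are genuinely used. The labelling construction and the case $M(f)=\emptyset$ are routine, and passing from the two bounds to (i) and (ii) is immediate.
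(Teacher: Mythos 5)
The paper itself contains no proof of this statement: it is quoted as a known result from \cite{4}, with only the two supporting lemmas (Lemmas 2.2 and 2.3 of \cite{4}) reproduced in the preliminaries. Your argument is correct and is essentially the argument of that source: the upper bound $\lambda(\G)\le n+c(\G^{c})-2$ by threading consecutive labels along an optimal path cover of $\G^{c}$ with one skipped label between paths, and the converse bound by analysing the label classes of a span-optimal, hole-minimal labelling, split into the case $M(f)=\emptyset$ (the labelling is injective, the blocks between holes give exactly $\lambda(\G)-n+2$ vertex-disjoint paths of $\G^{c}$) and the case $M(f)\neq\emptyset$ (no gaps, so the levels flanking each hole have size at least two, the $\G$-edges between them form a matching, and splicing the blocks' Hamiltonian paths through the complementary bridge edges gives a Hamiltonian path of $\G^{c}$, i.e.\ $c(\G^{c})=1$). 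The one point you should state explicitly is that ``minimal'' must be understood as in \cite{4} --- minimum number of holes among labellings of span exactly $\lambda(\G)$ --- rather than the looser paraphrase in this paper's preliminaries (minimum holes over all $L(2,1)$-labellings), since your identity $h(f)+1=\lambda(\G)-n+2$ in the multiplicity-free case needs $span(f)=\lambda(\G)$; with that reading your construction and case analysis go through.
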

A \textit{clique} of a graph $\mathcal{G}$ is a subset of  $V(\mathcal{G})$  such that any two distinct vertices of this subset are adjacent in $\mathcal{G}$. The maximum cardinality of a clique in $\mathcal{G}$ is known as its \textit{clique number}, denoted by $\omega(\mathcal{G})$. An \textit{independent set} of a graph $\G$ is a subset of $V(\G)$ such that no  two vertices in the subset are adjacent. The maximum cardinality of an independent subset in $\G$ is called its \textit{independence number}, denoted by $\alpha(\G)$. In \cite{15}, the authors investigated the relationships between $\lambda$-number and clique number, and also between $\lambda$-number and independence number of $\G$. They obtained some interesting results which are presented below.
\begin{thm}[\cite{15}, Proposition 4.1]
Let $C$ be a clique of a graph $\G$  such that $|C|=\omega(\G)$. Then $\lambda(\G) =2\omega(\G) -2$ if there exist partitions 
$$\{A_1, A_2, ~.~.~.~, A_s\} ~\mbox{and}~ \{C_1,C_2,~.~.~.~, C_s, C_{s+1}\}$$
of $V(\G)\setminus C$ and $C$ respectively satisfying  the  following conditions for each $i \in \{1,2, \dots , s\}$
\begin{enumerate}
\item $|A_i| \leq |C_i|-1$.
\item Every vertex in $A_i$ and every vertex in $C_i$ are non-adjacent in $\G$.
\end{enumerate}
\end{thm}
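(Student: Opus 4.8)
The plan is to prove $\lambda(\G)\ge 2\omega(\G)-2$ and $\lambda(\G)\le 2\omega(\G)-2$ separately, using the hypotheses only for the upper bound. For the lower bound, note that $C$ induces a complete graph $K_{\omega}$ with $\omega=\omega(\G)$, so in any $L(2,1)$-labelling $f$ of $\G$ the vertices of $C$ receive pairwise distinct labels any two of which differ by at least $2$; arranging these labels in increasing order $t_1<t_2<\dots<t_{\omega}$ forces $t_{\omega}\ge t_1+2(\omega-1)$, hence $span(f)\ge 2\omega-2$. This requires nothing about $\G$ beyond $\omega(\G)=\omega$.

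For the upper bound I would construct a labelling valued in $\{0,1,\dots,2\omega-2\}$. Since a clique admits any vertex ordering without affecting $\G$, first list $C=(v_1,\dots,v_{\omega})$ so that each block $C_i$ is an interval $\{v_{a_i},\dots,v_{b_i}\}$ (put $C_1$ first, then $C_2$, and so on). Define $f(v_j)=2(j-1)$, so $C$ occupies exactly the even labels $0,2,\dots,2\omega-2$. Inside block $C_i$ the odd integers strictly between $f(v_{a_i})$ and $f(v_{b_i})$ are $2a_i-1,2a_i+1,\dots,2b_i-3$, exactly $|C_i|-1$ of them; by hypothesis~(1), $|A_i|\le|C_i|-1$, so I may choose an injection of $A_i$ into this set of odd values and extend $f$ accordingly. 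Disjointness of the intervals $C_i$ makes these odd ``windows'' pairwise disjoint (indeed separated by a gap), so $f$ is well defined on $V(\G)$, takes values in $\{0,\dots,2\omega-2\}$, and is injective (clique vertices get even labels, the rest get odd labels, and the windows are disjoint).

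It remains to verify that $f$ is an $L(2,1)$-labelling. Injectivity makes the distance-two condition automatic, so only adjacent vertices need differ by at least $2$. Two clique vertices do, by construction. For $u\in A_i$ with $f(u)=2k-1$, the only vertices carrying a label equal to $2k-2$ or $2k$ are $v_k$ and $v_{k+1}$, and from $2a_i-1\le 2k-1\le 2b_i-3$ we get $a_i\le k<k+1\le b_i$, so $v_k,v_{k+1}\in C_i$; hypothesis~(2) forbids edges between $A_i$ and $C_i$, so $u$ is adjacent to neither, while every other vertex has label differing from $f(u)$ by at least $2$. Two vertices of the same $A_i$, or of different $A_i$ and $A_j$, carry distinct odd labels from one window or from two disjoint windows, so again differ by at least $2$. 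Thus $f$ has span $2\omega-2$, giving $\lambda(\G)\le 2\omega(\G)-2$; together with the lower bound this yields $\lambda(\G)=2\omega(\G)-2$.

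The step I expect to be the crux is the coordination of the two partitions: one must see that, after the (cost-free) reordering of $C$, hypothesis~(2) is exactly what cancels the unit-distance clashes between an $A_i$-vertex's odd label and the two clique labels flanking it, while hypothesis~(1) is exactly what keeps the odd slots in each block from running out. One should also be careful to confirm that the construction delivers all-distinct labels, since that is precisely what removes any need to track distance-two paths among the non-clique vertices; if repeated labels were allowed one would instead have to analyse pairs of non-clique vertices joined at distance two through a common clique neighbour. (Alternatively, one might try to deduce the upper bound from Theorem~1.1 of \cite{4} by producing a path cover of $\G^{c}$ of the appropriate size, but the direct construction above seems cleaner.)
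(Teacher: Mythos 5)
Your argument is correct and complete: the clique forces the lower bound $2\omega(\G)-2$, and your even/odd ``window'' construction (clique on $0,2,\dots,2\omega-2$, each $A_i$ injected into the $|C_i|-1$ odd labels inside the interval of $C_i$, with hypothesis (2) killing the only unit-distance clashes and injectivity disposing of the distance-two condition) is essentially the standard proof of this result. Note that the paper itself gives no proof here --- it quotes the statement from \cite{15} as a preliminary --- and that the statement as printed contains a typo which you implicitly (and correctly) repaired: $\{A_1,\dots,A_s\}$ partitions $V(\G)\setminus C$ while $\{C_1,\dots,C_{s+1}\}$ partitions the clique $C$ itself, which is exactly the reading your construction uses.
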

\begin{thm}[\cite{15}, Proposition 2.4]
 If $\G$ is a graph of order $n$, then $\lambda(\G) \leq 2n - \alpha(\G) -1$.
\end{thm}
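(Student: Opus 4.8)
The plan is to prove the bound constructively: for an arbitrary graph $\G$ on $n$ vertices I will exhibit an $L(2,1)$-labelling of span at most $2n-\alpha(\G)-1$, which suffices because $\lambda(\G)$ is by definition the minimum span of such a labelling. Write $\alpha=\alpha(\G)$, fix a maximum independent set $S=\{u_1,\dots,u_\alpha\}$ of $\G$, and list the remaining vertices as $w_1,\dots,w_{n-\alpha}$. If $n=\alpha$ then $\G$ is edgeless, the all-zero map is an $L(2,1)$-labelling, and $0=\lambda(\G)\le n-1=2n-\alpha-1$; so from now on I may assume $n-\alpha\ge 1$.

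Next I would write down the labelling $f$: set $f(w_i)=2(i-1)$ for $1\le i\le n-\alpha$, and $f(u_j)=2(n-\alpha)+(j-1)$ for $1\le j\le\alpha$. Thus the complement $V(\G)\setminus S$ receives the even values $0,2,\dots,2(n-\alpha-1)$, the set $S$ receives the consecutive values $2(n-\alpha),2(n-\alpha)+1,\dots,2(n-\alpha)+\alpha-1$, and the single value $2(n-\alpha)-1$ is left empty. The largest label used is $2(n-\alpha)+\alpha-1=2n-\alpha-1$, which is exactly the target span.

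Then I would verify the two $L(2,1)$ conditions one pair-type at a time. Any two of the $w_i$ have labels differing by at least $2$, so no condition can fail on such a pair. Since $S$ is independent no two of the $u_j$ are adjacent, and their labels are pairwise distinct, so a pair of them at distance two is also fine. For a mixed pair $w_i,u_j$ one has $f(u_j)-f(w_i)\ge 2(n-\alpha)-2(n-\alpha-1)=2$, which satisfies the adjacency requirement (difference $\ge 2$) and a fortiori the distance-two requirement; hence such a pair never causes a violation either. Therefore $f$ is a valid $L(2,1)$-labelling and $\lambda(\G)\le 2n-\alpha-1$.

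The argument is essentially bookkeeping, so I do not expect a genuine obstacle; the one place that needs care is the seam between the two label blocks. I must open the block for $S$ a full two units above the top label of $V(\G)\setminus S$ — deliberately wasting the value $2(n-\alpha)-1$ — so that an edge joining $S$ to its complement cannot be violated, while still making the largest label land on exactly $2n-\alpha-1$. Spending only unit-spaced labels on $S$, which is legitimate precisely because $S$ induces no edges, is what makes these two requirements consistent. The only other thing to remember is to dispose of the edgeless case $n=\alpha$ separately, as done above.
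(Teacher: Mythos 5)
Your proof is correct: the two label blocks (even labels $0,2,\dots,2(n-\alpha-1)$ on $V(\G)\setminus S$, consecutive labels $2(n-\alpha),\dots,2n-\alpha-1$ on a maximum independent set $S$, with the gap at $2(n-\alpha)-1$) satisfy both $L(2,1)$ conditions exactly as you verify, and the edgeless case is handled properly. Note that the paper itself gives no proof of this statement --- it is quoted from the cited reference [15] (Proposition 2.4) --- so there is no in-paper argument to compare against; your construction is a standard and valid self-contained justification of the bound.
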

A $p$-group is a group  in which order of every element is  a power of $p$, where $p$ is a prime number.  A finite group is a $p$-group if and only if  its order is a power of a prime $p$. The \textit{Power Graph} $\Gamma_G$  \cite{CGS, 8} of a finite group $G$ is the simple graph with vertex set $G$ in which two vertices are adjacent  if and only if one is the power of the other. The exact value of $\lambda$-number of power graphs of dihedral group $D_{2n}$, $p$-group, and generalised quaternion group $Q_{4n}$ was computed in \cite{15}. For some certain classes of groups of order $n$, they obtained an upper bound related to $\lambda$-number of a power graph.
\begin{thm}[\cite{15}, Theorem 4.1]
If $G$ is a group of order $n$, where $n$ is not a prime power, then $\lambda(\Gamma_G) \leq 2n-4$. The bound is achieved if and only if  $G$ is isomorphic to $\mathbb{Z}_2\times\mathbb{Z}_2$ or $\mathbb{Z}_{2q}$, where $q$ is a prime greater than $2$.
\end{thm}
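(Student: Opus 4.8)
The plan is to reduce everything to the path covering number of the complement $\Gamma_G^{c}$ and then read off both the bound and the extremal groups. First recall that the identity $e$ is a universal vertex of $\Gamma_G$ (indeed $e=x^{|x|}$ for every $x$), hence an isolated vertex of $\Gamma_G^{c}$; since $n=|G|\ge 6$ and $\Gamma_G^{c}$ has an edge (see the next paragraph), $\Gamma_G^{c}$ is disconnected, so $c(\Gamma_G^{c})\ge 2$ and Theorem~1.1 of~\cite{4} gives $\lambda(\Gamma_G)=n+c(\Gamma_G^{c})-2$. A family of $k$ vertex-disjoint paths covering all $n$ vertices is a spanning subgraph that is a union of paths with exactly $n-k$ edges, so $c(\Gamma_G^{c})=n-\ell$, where $\ell=\ell(\Gamma_G^{c})$ is the maximum number of edges in a subgraph of $\Gamma_G^{c}$ that is a disjoint union of paths. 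Consequently
\[
\lambda(\Gamma_G)=2n-2-\ell(\Gamma_G^{c}),
\]
so $\lambda(\Gamma_G)\le 2n-4$ is equivalent to $\ell(\Gamma_G^{c})\ge 2$, and equality holds precisely when $\ell(\Gamma_G^{c})=2$.

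To get the bound I would exhibit two edges of $\Gamma_G^{c}$ forming a path on three vertices. Since $n$ is not a prime power, it has an odd prime divisor $p$ and a second prime divisor $q\neq p$. By Cauchy's theorem $G$ has a subgroup of order $p$, hence $p-1\ge 2$ elements of order $p$, and an element $b$ of order $q$. For every element $a$ of order $p$, neither of $a,b$ is a power of the other (their orders are coprime and exceed $1$), so $\{a,b\}\in E(\Gamma_G^{c})$; thus $b$ has at least two neighbours in $\Gamma_G^{c}$, giving $\ell(\Gamma_G^{c})\ge 2$ and $\lambda(\Gamma_G)\le 2n-4$.

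For the extremal cases I would first record the elementary fact that a graph has $\ell=2$ if and only if, after deleting isolated vertices, it is a star $K_{1,m}$ with $m\ge 2$, a triangle $K_3$, or two disjoint edges $2K_2$. For the \enquote{if} direction: if $G=\mathbb{Z}_{2q}$ with $q>2$ prime, then $\Gamma_G^{c}$ is a star $K_{1,q-1}$ whose centre is the unique involution and whose leaves are the $q-1$ elements of order $q$, together with $q$ isolated vertices (the identity and the $q-1$ generators), so $\ell=2$; and $\Gamma_{\mathbb{Z}_2\times\mathbb{Z}_2}=K_{1,3}$, so its complement is $K_3\cup K_1$ and again $\ell=2$. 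Hence both groups attain $\lambda=2n-4$. (The group $\mathbb{Z}_2\times\mathbb{Z}_2$ has order $4$, a prime power, so it is a degenerate boundary case of the statement; only $\mathbb{Z}_{2q}$ is covered by the hypothesis.) For the \enquote{only if} direction, suppose $\ell(\Gamma_G^{c})=2$. If two distinct odd primes $p,r$ divided $n$, the (at least two) elements of order $p$ together with the (at least two) of order $r$ would span $K_{2,2}\supseteq P_4$ in $\Gamma_G^{c}$; if $G$ had two subgroups of order $p$ it would contain $K_{p-1,p-1}\supseteq K_{2,2}$; if $G$ had two involutions $t_1,t_2$, then $a_1-t_1-t_2-a_2$ (with $a_1\ne a_2$ of order $p$) is a $P_4$. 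So $n=2^{a}p^{b}$ with a unique subgroup of order $p$ (forcing the Sylow $p$-subgroup to be cyclic) and a unique involution (forcing the Sylow $2$-subgroup to be cyclic or generalized quaternion), both normal; since the involution is incomparable to each order-$p$ element it has degree $\ge p-1\ge 2$ in $\Gamma_G^{c}$, which together with uniqueness of the order-$p$ subgroup rules out the $2K_2$ and $K_3$ cases, so $\Gamma_G^{c}$ is a star centred at the involution whose leaf set (an independent set of $\Gamma_G^{c}$) contains the $p-1$ elements of order $p$. If $a\ge 2$, an element of order $4$ would be a leaf adjacent to those leaves; if $b\ge 2$, an element of order $p^{2}$ would be a leaf adjacent to the leaf given by an element of order $2p$ (which exists because the subgroup of order $2$ and the Sylow $p$-subgroup are normal with coprime order, hence commute elementwise) --- both contradict independence of the leaf set. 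Therefore $n=2p$, and of the two groups of order $2p$ only $\mathbb{Z}_{2p}$ has a single involution, so $G\cong\mathbb{Z}_{2q}$ with $q=p>2$ prime.

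The main obstacle is the \enquote{only if} direction: converting the purely combinatorial constraint $\ell(\Gamma_G^{c})=2$ into arithmetic constraints on the orders of elements --- deciding precisely which elements become leaves of the star and which become isolated vertices of $\Gamma_G^{c}$, excluding the triangle component, and treating the $\mathbb{Z}_2\times\mathbb{Z}_2$ case consistently with the \enquote{$n$ not a prime power} hypothesis.
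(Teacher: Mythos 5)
This theorem is not proved in the paper at all: it is quoted from \cite{15} as background in Section 2, so there is no in-paper argument to measure your proposal against. Judged on its own, your proof is essentially correct and follows the natural route: the Georges--Mauro--Whittlesey path-covering criterion of \cite{4} (also quoted in Section 2), rewritten as $\lambda(\Gamma_G)=2n-2-\ell(\Gamma_G^{c})$ with $\ell$ the maximum number of edges in a linear forest of the complement (valid here because the identity is a dominating vertex of $\Gamma_G$, hence isolated in $\Gamma_G^{c}$, so $c(\Gamma_G^{c})\geq 2$). The upper bound then follows from one $P_3$ in the complement (two elements of order an odd prime $p$ and one element of coprime order), and the equality case from your classification of graphs with $\ell=2$ (star, triangle, or $2K_2$ up to isolated vertices), translated into uniqueness of the involution and of the subgroup of order $p$, after which $4\mid n$ and $p^{2}\mid n$ are excluded and only $n=2p$ with $G\cong\mathbb{Z}_{2p}$ survives. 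Your remark that $\mathbb{Z}_2\times\mathbb{Z}_2$ has prime-power order and is therefore only a boundary case of the stated hypothesis is also accurate, and you verify that it attains $2n-4$ anyway.

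Three small points should be tightened, none of them fatal. First, in ruling out $p^{2}\mid n$ you justify the existence of an element of order $2p$ by asserting that the Sylow $p$-subgroup is normal, which you have not established; what you do have is that the unique involution $t$ is central (it is fixed by every conjugation), so $t$ times any element of order $p$ already has order $2p$. Second, that element $w$ of order $2p$ is not a leaf of the star: $t=w^{p}$, so $w$ and $t$ are adjacent in $\Gamma_G$ and hence non-adjacent in $\Gamma_G^{c}$; the correct contradiction is simply that $wz$, with $z$ of order $p^{2}$, is an edge of $\Gamma_G^{c}$ not incident to the centre $t$, which is incompatible with the star structure. Third, when you rule out the $K_3$ case your parenthetical appeal to uniqueness of the order-$p$ subgroup should be made explicit for $p=3$: the two elements of order $3$ lie in that unique subgroup, are powers of each other, hence non-adjacent in the complement, so no triangle through $t$ can exist. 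With these repairs the argument is complete.
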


Furthermore, the $L(j,k)$-labeling of Cayley graphs has been studied in \cite{13}, and Kelarev, Ras, and Zhou \cite{14} revealed a relationship between a semigroup structure and the minimum distance labeling spans of its Cayley graph.

\section{Partite truncation of a graph}
In this section, we study a graph operation called as \textit{partite truncation} of a graph. This operation allows us to obtain a graph of relatively smaller order  from a graph of significantly larger order. We prove an interesting result in which we determine the $\lambda$-number and a corresponding minimum labeling of Beck's zero-divisor graph $\Gamma^{\prime}(R)$ from the graph $\Gamma(R)$ of a ring $R$, provided that $diam(\Gamma(R))<3$.

\begin{defn}
Let $\G$ be an $n$-partite graph with  $\V_1, \V_2, \dots, \V_n$ as partite sets. We define \textbf{partite truncated graph}  $\hat{\G}$ of $\G$ with vertices  $\hat{v_1}, \hat{v_2}, \dots, \hat{v_n}$, where $\hat{v_i}$ corresponds to the whole partite set $\V_i$, $1\leq i \leq n$, and two distinct vertices $\hat{v_i}$ and $\hat{v_j}$ are adjacent in $\hat{\G}$ if and only if there is a vertex in $\V_i$ adjacent to some vertex in $\V_j$.
\end{defn}

Thus, in a partite truncation of an $n$-partite graph, each partite set is truncated to one vertex, and edges  (if any)  connecting two partite sets are truncated to a single edge.
It is clear from the definition that if $\G$ is connected, so is $\hat{\G}$, and if there is a path between two partite sets $\V_i$  and $\V_j$ of $\G$, then there must be a path between the two corresponding vertices $\hat{v_i}$  and $\hat{v_j}$  in $\hat{\G}$.
If $diam(\G) = 1$, then $ \G \cong \mathcal{K}_{n}  \cong  \hat{\G} $. Also, if the cardinality of each partite set of $\G$ is equal to one, then $\G \cong \hat{\G}$. We consider $n$-partite graphs $\G$ with $diam(\G) > 1$ in which there is at least one partite set $\V$ in $\G$ of  cardinality greater than one.

Let $\mathcal{H}$ denote the $n$-partite graph with the property that the bipartite subgraph induced by any two distinct partite sets $\V$ and $\mathcal{W}$ is either complete bipartite or empty. If $diam(\Hy) = 2$,  then there are some $u, w \in V(\Hy)$  such that $d(u, w)= diam(\Hy) $. If $u, w$ are in two  distinct partite sets $\V, \W$ of $\Hy$, then
$ d(\hat{v}, \hat{w}) = diam(\hat{\Hy})=2 $. However, if  there are no such vertices $u, w$  lying in two distinct partite sets $\V, \W$ of $\Hy$, then $diam(\hat{\Hy}) = 1$. Consequently, $diam(\hat{\Hy}) \leq 2 = diam(\Hy)$. If $diam(\Hy) \geq 3$, then it is easy to verify that $diam(\Hy) = diam(\hat{\Hy})$.

In the following result, we see that there is a shift  in the $\lambda$-number of a graph with a diameter less than three, when a particular number of isolated vertices (set of independent vertices ), and then  a dominant vertex (a vertex adjacent to all these isolated vertices and to all other  vertices of the graph) are added to it. This subsequently aids in determining the $\lambda$-number of Beck's zero-divisor graph $\Gamma^{\prime}(R)$ of a ring $R$.
\begin{prop}\label{z}
Let $\G$ be a graph with $diam(\G) <3$, and let the minimum labeling  of $\G$ have $s$ many holes. If $\G_{m+1}$ is the graph obtained from $\G$ when $m$ isolated vertices, and subsequently  a dominant vertex are added to it, then the following holds,
$$\lambda(\G_{m+1})=\begin{cases}
\lambda(\G)+m-s+2,~~~ m > s,\\
\lambda(\G)+2,~~~   ~~~~~~~~~~~ m \leq s.
\end{cases}$$

\end{prop}
\begin{proof}
 Let  $u_1, u_2, \dots , u_m$ denote the isolated vertices, and $u_0$ be the dominant vertex. Also, let $f$ be the minimum labeling of $\G$ with holes $h_1, h_2, \dots , h_s$. We define $L(2,1)$-labeling $g$ on $\G_{m+1}$ as,  $~g(u_0)= 0, ~~g(v) = f(v)+2, ~ \mbox{for all}~ v \in V(\G)$ and $ g(u_i)=  h_i+2,~\mbox{for all}~i,~  1 \leq i \leq s.$ For  $m > s, ~g(u_{s+j}) = \lambda(\G) + j +2,~\mbox{for all}~j,~1 \leq j \leq m-s$.

Therefore, 
$$span(g)= \begin{cases}
\lambda(\G)+m-s+2,~~~ m > s,\\
\lambda(\G)+2,~~~   ~~~~~~~~~~~ m \leq s.
\end{cases}$$
 Since $diam(\G)<3$, therefore $f$ must be one-to-one. This implies,  $g$ is well defined $L(2,1)$-labeling of $\G_{m+1}$. Thus, $\lambda(\G_{m+1}) \leq span(g)$. If we assume that there exist another $L(2,1)$-labeling $h$ of $\G_{m+1}$ with $span(h)<span(g)$, then  the restriction $h\restriction_{V(\G)}$  is a well defined $L(2,1)$-labeling of $\G$  with $span(h\restriction_{V(\G)})<span(f)$, a contradiction.
\end{proof}
\begin{cor} \label{cor}
Let $R$ be a finite ring of order $n$ such that  $diam(\Gamma(R)) < 3$, $\lambda(\Gamma(R)) = k$, and the minimum labeling of $\Gamma(R)$ have  $s$  many holes. Then the following holds, 
$$\lambda(\Gamma^{\prime}(R))=  \begin{cases}
k+n-m-s+1,  ~~~ n-m-1>s,\\
k+2,~~~~~~~~~~~~~~~~~~~n-m-1\leq s,
\end{cases}$$
 where $m$ is the order of $\Gamma(R)$.
\end{cor}
\begin{proof}
Any two distinct vertices $x,y \in R$ are adjacent in $\Gamma^{\prime}(R)$ if and only if $xy=0$. Therefore, from the structure of $\Gamma'(R)$, it is clear that the set of $n-m-1$ many units of $R$ represents the isolated vertices, and $0$ represents the dominant vertex added to  $\Gamma(R)$. Therefore, by Proposition~\ref{z}, $$\lambda(\Gamma^{\prime}(R))=  \begin{cases}
k+n-m-s+1,  ~~~ n-m-1>s,\\
k+2,~~~~~~~~~~~~~~~~~~~n-m-1\leq s.
\end{cases}$$
\end{proof}

Let $\hat{\Hy}$ be the partite truncation of the $n$-partite graph $\Hy$ with $diam(\Hy) \geq 3 $. Suppose $\lambda(\hat{\Hy})=k$ with the corresponding minimum labeling as $f$. Consider the set,
\begin{eqnarray*}
 \mathcal{F}&=&\{f(v):v \in V(\hat{\Hy})\}\\
 &=& \{ i_0, i_1, i_2, \dots , i_m \},
 \end{eqnarray*}
 where $f(v)=i_j$,  $0= i_0 < i_1 < \dots  < i_m = k$. Let $s_j$ denotes the  multiplicities  of $i_j, ~\mbox{for all} ~j,~0 \leq j \leq m$. For each $j$,  consider the subset $W_j = \{ v_{j, 1},  v_{j, 2}, \dots , v_{j, {s_j}}\}  = f^{-1}(i_j)
$ of vertices of $\hat{\Hy}$. Furthermore, for each $v_{j, l} \in W_j, 1\leq l \leq s_j$, let $\V_{j, l}$ be its corresponding partite set  in $\Hy$. Define the set $\mathcal{C}$, a collection of  partite sets of $\Hy$ by, 
\begin{eqnarray*}
\mathcal{C} &=& \{\V_0, \V_{1}, \V_{2}, \dots \V_m\},
\end{eqnarray*}
where, 
 \begin{eqnarray}
 \label{eqn}
\V_j &\in& \{\V_{j, l}:1\leq l \leq s_j\} \mbox{ and } |\V_j| = \max\limits_{1 \leq l \leq s_j}\{|\V_{j, l}|\}.
\end{eqnarray}

Now, we prove an important result related to $\lambda$-number of graphs $\Hy$ and $\hat{\Hy}$. We determine the $\lambda$-number of $\Hy$ if the $\lambda$-number of its partite truncation $\hat{\Hy}$ is given. This result also provides an algorithm for finding the minimal $L(2,1)$-labeling of $\Hy$  given the minimum $L(2,1)$-labeling of $\hat{\Hy}$.
\begin{thm}
\label{imp}
If $\Hy$ is an n-partite graph  and $\hat{\Hy}$ is its partite truncation,
then the following holds,
$$\lambda(\Hy) = \begin{cases}
|V(\Hy)| + \lambda(\hat{\Hy}) - n,  \text{ if }  diam(\Hy) = 2,\\
\sum\limits_{ \V_j \in \mathcal{C}} |V_j| + \lambda(\hat{\Hy}) - |\mathcal{C}|, \text{ if } diam(\Hy) \geq 3.
\end{cases}$$
\label{b} 
\end{thm}
\begin{proof} Let $f$ be the minimum $L(2, 1)$-labeling of the graph $\hat{\Hy}$. We consider the following two cases.\\
\textbf{Case - I : $diam(\Hy) = 2$.}\\
In this case, $f$ is injective. Therefore,
\begin{eqnarray*}
 \mathcal{F}&=&\{f(v):v \in V(\hat{\Hy})\}\\
 &=& \{0= i_0< i_1< i_2< \dots < i_m=\lambda(\hat{\Hy}) \}.
 \end{eqnarray*}

Here $s_j=1 ~\mbox{for all} ~j,~0 \leq j \leq n-1$. Let $v_j$ denote  the vertex in $\hat{\Hy}$ such that $f(v_j)=i_j$, and let $\V_j$ be its corresponding partite set in $\Hy$ of cardinality $m_j.$
Denote the vertices of $\Hy$ lying in the partite set $\V_j$ by $v_{j, i}$, where $1 \leq i \leq m_j$, that is,
\begin{eqnarray*}
\V_0 &=&\{ v_{0, 1}, v_{0, 2}, .~.~.~v_{0, m_0}\},\\
\V_1 &=&\{ v_{1, 1}, v_{1, 2}, .~.~.~v_{1, m_1}\},\\
&\vdots&\\
\V_{n-1} &=&\{ v_{{n-1}, 1}, v_{{n-1}, 2}, .~.~.~v_{{n-1}, m_{n-1}}\}.
\end{eqnarray*}
 Define $L(2,1)$-labeling  $g:V(\Hy) \longrightarrow \mathbb{Z}_{ \geq 0}$  on $\Hy$ by,
\begin{eqnarray*}
g(v_{0, i}) &=& i - 1,\\
g(v_{j, i})  &=& i + (i_j - i_{j-1}) + \max\limits_{v \in \V_{{j-1}}}\{ g(v)\} - 1,
\end{eqnarray*}
where,  $$\max\limits_{v \in \V_{_{j-1}}}\{ g(v)\} = m_0 + m_{1} + \dots + m_{{j-1}} + i_{j-1} -j.$$

Therefore, $span(g)   = |V(\Hy)| + \lambda(\hat{\Hy}) -n$. Since $f$ is well defined $L(2, 1)$-labeling of  $\hat{\Hy}$, vertices of $\Hy$ lying in the same partite set are at distance two apart, and the distance between two vertices of $\Hy$ lying in distinct partite sets is same as the distance between their corresponding vertices in $\hat{\Hy}$. This implies, $g$ is well defined $L(2,1)$-labeling of $\Hy$, and consequently  $\lambda(\Hy) \leq |V(\Hy)| + \lambda(\hat{\Hy}) -n$.

 Suppose $g$ is not a minimal $L(2,1)$-labeling, and let $h$ be a minimal $L(2,1)$-labeling of $\Hy$ with
$Span(h) < span(g).$ The operation partite truncation contracts each partite set to one vertex, and since the vertices lying in the same partite set are at distance two from each other, therefore, 
\begin{eqnarray*}
\lambda(\hat{\Hy}) &=& \lambda(\Hy) - \sum\limits_{ 0 \leq j \leq n-1} (|\V_j| - 1)\\
 &=& \lambda(\Hy)  - |V(\Hy)| + n\\
&<& |V(\Hy)| +  \lambda(\hat{\Hy}) -n - |V(\Hy)| + n\\
&=&  \lambda(\hat{\Hy})
\end{eqnarray*}
which clearly is a contradiction. Thus, $\lambda(\Hy) = |V(\Hy)| +  \lambda(\hat{\Hy})- n$.\\ 
\textbf{Case - II : $diam(\Hy) \geq 3$.}\\
We have, $diam(\hat{\Hy}) = diam(\Hy)   \geq 3$. This implies, $f$ may not be injective, which in turn implies that $s_j\geq1~\mbox{for all} ~j,~0 \leq j \leq m$. As above, 
\begin{eqnarray*}
 \mathcal{F}&=&\{f(v):v \in V(\hat{\Hy})\}\\
 &=& \{0= i_0< i_1< i_2< \dots < i_m=\lambda(\hat{\Hy}) \}.
 \end{eqnarray*}
 and
$\mathcal{C} = \{\V_0, \V_{1}, \V_{2}, \dots \V_m\}.$ Define $L(2,1)$-labeling $g$ on $V(\Hy)$ as follows, 
\begin{enumerate}
\item Label the partite sets in $\mathcal{C}$ in the same way as in case-I.
\item For each $j$, provide same labels for vertices in the remaining partite sets $\V_{j, l}$ (if any) as given to vertices in $\V_j \in \mathcal{C}$ (see \eqref{eqn} for the definition of  $\V_{j, l}$).
\end{enumerate}
On the similar lines as in case-I,  it can be shown that $g$ is a well defined  minimal  $L(2,1)$-labeling of $\Hy$, and we conclude that  $\lambda(\Hy) = \sum\limits_{ \V_j \in \mathcal{C}} |\V_j| + \lambda(\hat{\Hy}) - |\mathcal{C}|.$
\end{proof}
\begin{rem} The set $\mathcal{C}$ can be also defined by defining an equivalence relation $\sim$ on $V(\hat{\Hy})$ by $v_1 \sim v_2$ if and only if $f(v_1) = f(v_2)$, where $f$ is a minimal $L(2, 1)$-labeling of $\hat{\Hy}$. Since there is one-to-one correspondence between  $n$ vertices of $\hat{\Hy}$ and $n$ partite sets of $\Hy$, we have an equivalence relation on $n$-partite sets of $\Hy$ as well. We choose the representatives of each equivalence class in a way that it has maximum cardinality among other other partite sets in its class. The collection of all such partite sets among all equivalence classes defines $\mathcal{C}$.
\label{r}
 \end{rem}
Next, we provide some examples in which we compute the $\lambda-$number of a complete $n$-partite graph, and $\lambda$-number of  the zero-divisor graph of a ring $\mathbb{F}_p \times \mathbb{F}_q$, where  $\mathbb{F}_p$ and $\mathbb{F}_q$ are finite fields of order $p$ and $q$ respectively.
 \begin{exm}
 \label{imp1}
 Let $\mathbb{K}_{m_1,m_2, \dots, m_n}$ be a complete $n$-partite graph, and let $\V_i$ with $|\V_i| = m_i, 1\leq i\leq n$, be its partite sets. We assume that there exists at least one $ m_i$ such that $m_i>1$. This implies,  $diam(\mathbb{K}_{m_1, m_2, \dots , m_n}) = 2$. Note, that the partite truncation of  $\mathbb{K}_{m_1,m_2, \dots, m_n}$ is isomorphic to $\mathbb{K}_n$, and a bipartite subgraph induced by any two distinct partite sets $\mathcal{V_i}$ and $\mathcal{V_j}$ is a complete bipartite graph. It is easy to verify that $\lambda(\mathbb{K}_n) = 2n-2$. Thus, by Theorem~\ref{imp},  $\lambda(\mathbb{K}_{m_1,m_2, \dots, m_n})= \sum\limits_{i=1}^{n}m_i+n-2$.
 \end{exm}
\begin{exm}
Let $\mathbb{F}_p$ and $\mathbb{F}_q$ be two finite fields of order $p$ and $q$, respectively. The set of non-trivial zero divisors of $\mathbb{F}_p \times \mathbb{F}_q$ and hence the vertices of $\Gamma(\mathbb{F}_p \times \mathbb{F}_q)$ are given as $\V_1 = \{ (a,0) : a \in \mathbb{F}_p ~ \mbox{and} ~ a \neq 0\}$,
$\V_2 = \{ (0,b): b \in \mathbb{F}_q  ~ \mbox{and} ~ b \neq 0 \}$. This implies, $| \V_1| = p-1$ and $|\V_2| = q-1$.
There are no two vertices in sets $\V_1$ or $\V_2$ that are adjacent to each other, and each vertex in $\V_1$ is adjacent to every vertex in $\V_2$. Thus, $\Gamma( \mathbb{F}_p \times \mathbb{F}_q)  \cong \mathbb{K}_{p-1, q-1}$, and consequently,  $ \lambda(\G) = p+q -2$.
\end{exm}
 In the following result, we demonstrate that by applying the operation \enquote{partite truncation} to the zero-divisor graph of a reduced ring we obtain the zero-divisor graph associated with a Boolean ring.
 \begin{prop}
For $1 \leq i \leq n$, let $\bF_{q_i}$ be finite fields of order $q_i$ and let $\G \cong \Gamma(\prod\limits_{i = 1}^n \bF_{q_i})$. Then $\G$ is a $(2^n-2)$-partite graph with
 $\hat{\G} \cong  \Gamma(\prod\limits^n \Z_2).$
 \label{c}
 \end{prop} 
 \begin{proof}
 We prove the result in two parts parts. First, we show that $\G$ is $(2^n-2)$-partite graph, and then we show its partite truncation is isomorphic to $\Gamma(\prod\limits^n \Z_2)$.\\
  \textbf{Part-I}\\
Note that each non-trivial zero divisor of $\Gamma(\prod\limits_{i = 1}^n \bF_{q_i})$, and hence every vertex of $\G$ is an n-tuple which has at least  one zero  and one non-zero entry in it. For $1 \leq t < n$, we partition the vertices of $\G$ as follows,
 $$\mathcal{U}_t = \{ v_t : v_t \mbox{ has exactly }t \mbox{ non-zero entries in it}\}.$$
For each such $t$, the number of ways to choose $t$ entries out of $n$ entries is $\binom{n}{t}$. Furthermore, for each $t$, we partition the set $\mathcal{U}_t$ into $\binom{n}{t}$ sets as, $\mathcal{U}_t = \big\{\V_{l, t} : 1 \leq l \leq \binom{n}{t}\big\},$
where $\V_{l, t} $ consists of those vertices of $\mathcal{U}_t$ that have $t$ non-zero entries in the same position. For each $l$ and $t$, no two vertices in $\V_{l, t} $  are adjacent to each other. On the other hand, if a vertex of one such set say $\V_{l, r}$ is adjacent to some vertex of another set say $\V_{l', r'}$, where $1 \leq r \leq r' < n$, then each vertex of $\V_{l, r}$  is adjacent to every vertex of  $\V_{l', r'}$. The total number of all such sets in $V(\G)$ is given as,
  $\sum\limits_{1 \leq t < n} \binom{n}{t} = 2^n - 2.$ This implies, $\G$ is a $(2^n-2)$-partite graph.\\
  \textbf{Part-II}\\
As in Part-I, we partition vertex set of the graph $\Gamma(\prod\limits^n \Z_2)$ into $\big\{\V_{l, t}: 1 \leq l \leq \binom{n}{t}\big\}$. Since there is only one non-zero term in $\Z_2$, therefore, for each $l$ and $t$, $|\V_{l, t}|=1$. It follows that the order of  $\Gamma(\prod\limits^n \Z_2)$ is equal to the number of partite sets in  $ \Gamma(\prod\limits_{i=1}^n \bF_{q_i})$, which by Part-I is equal to $2^n-2$. The map $\hat{v_{l, t}}  \longrightarrow  \V_{l, t}$ defines an isomorphism between  $\hat{\G}$ and $\Gamma(\prod\limits^n \Z_2).$ 
\end{proof}
The following example illustrates the concept of partite truncation of a graph as well as Theorem~\ref{b} and Proposition~\ref{c}.
 \begin{exm}
 Let $\bF_{q_i}$ be finite fields of order $q_i$, where  $1 \leq i \leq 4$ and $q_1 \leq q_2 \leq q_3 \leq q_4$. As illustrated in Proposition~ \ref{c}, we partition vertex set of the graph $\Gamma(\prod\limits_{i=1}^4 \bF_{q_i})$ as,
  $ \mathcal{U}_1 = \big\{ \V_1, \V_2, \V_3, \V_4 \big\},
 \mathcal{U}_2= \big\{ \V_{12}, \V_{13}, \V_{14}, \V_{23}, \V_{24}, \V_{34}\big\}$ and $
 \mathcal{U}_3= \big\{ \V_{123}, \V_{124}, \V_{134}, \V_{234}\big\},$ 
  \begin{figure}\centering
   \begin{minipage}{0.48\textwidth}
    \hskip1cm  \includegraphics[scale=.19]{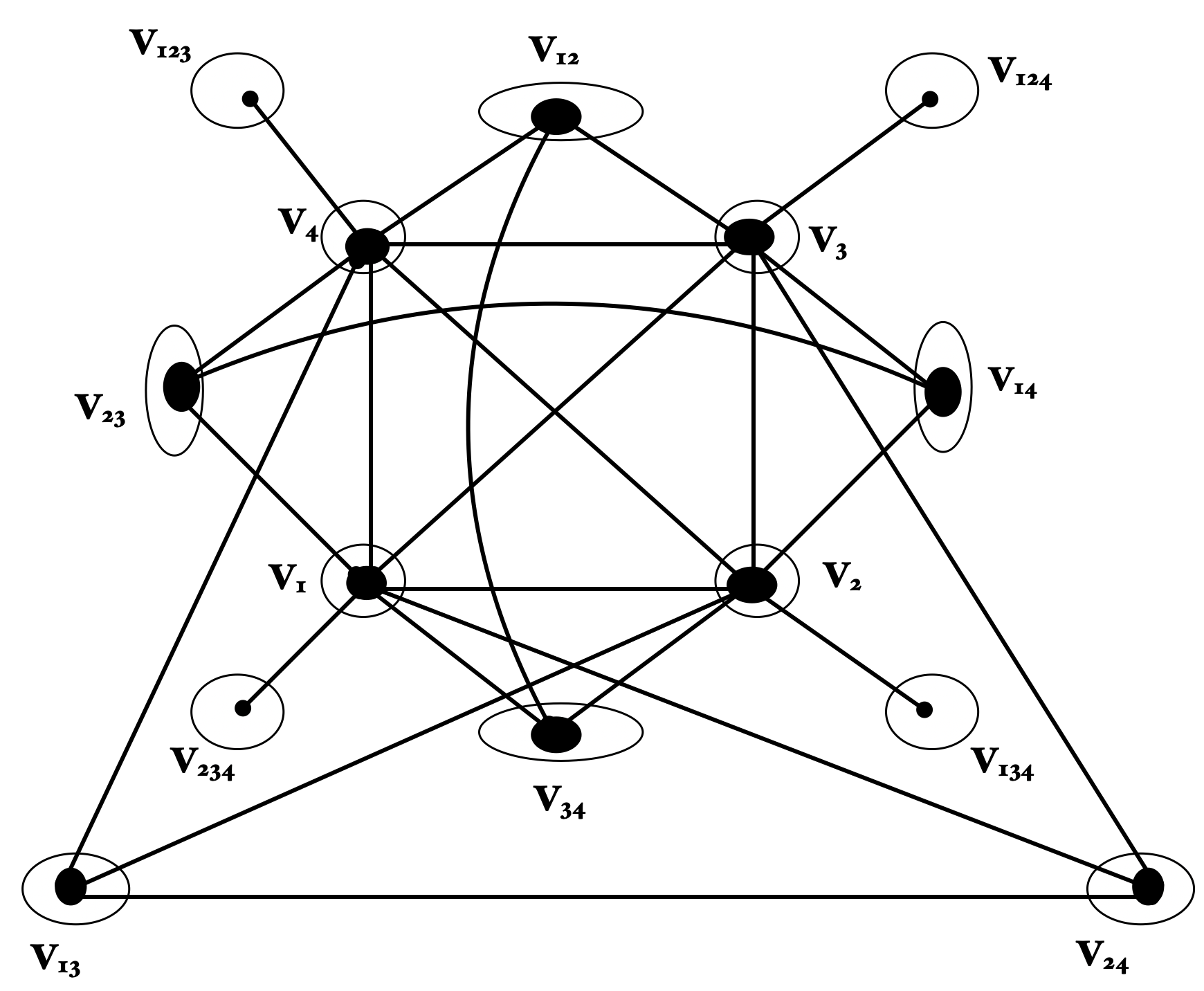}
     \caption{A Rough Sketch of $ \Gamma(\prod_{i=1}^4 \bF_{q_i})$}
 \label{Fig:1}
   \end{minipage}
   \begin {minipage}{0.48\textwidth}
    \hskip1cm  \includegraphics[scale=.26]{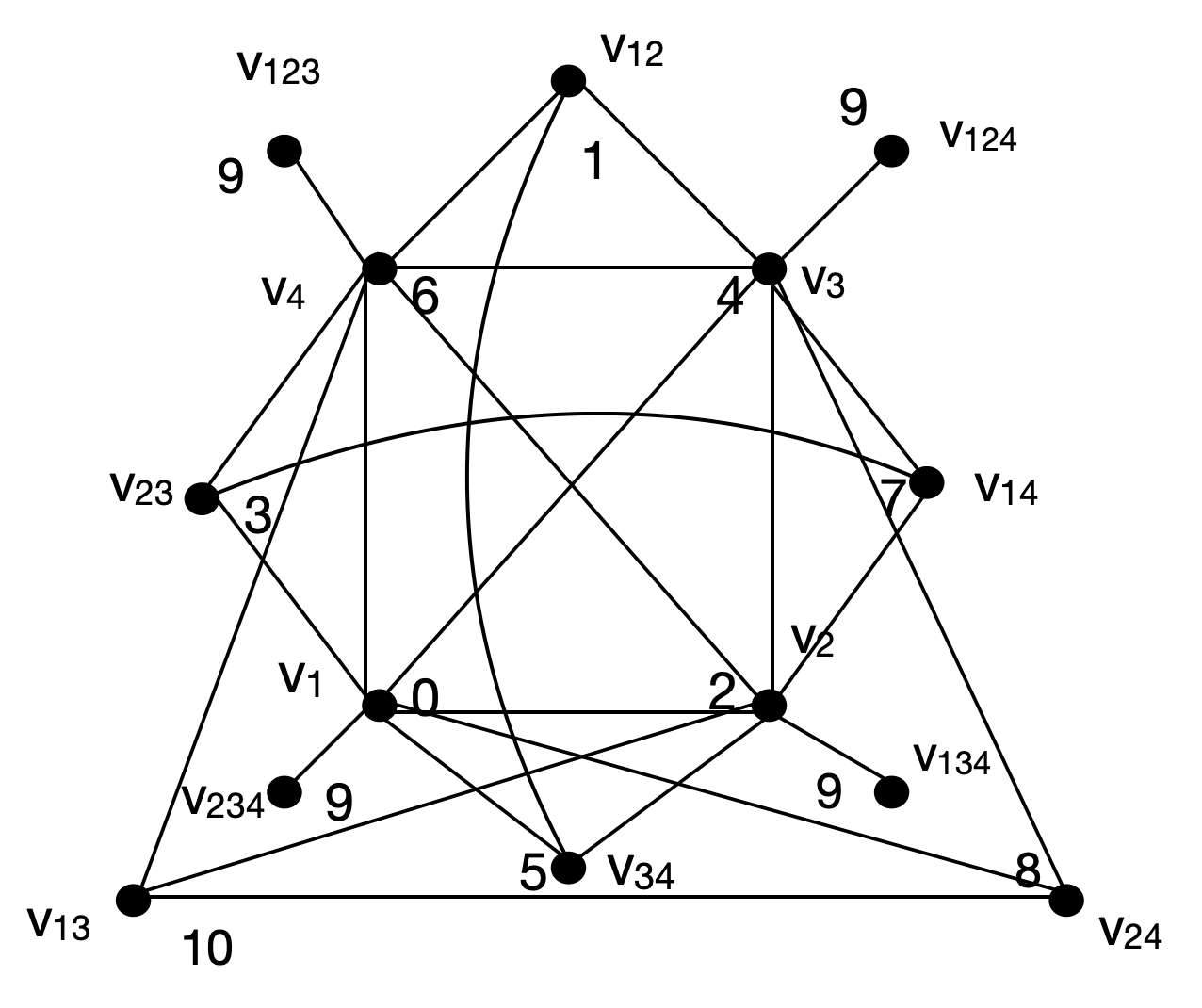}
      \caption{$\Gamma((\Z_2)^4)$ }
 \label{Fig:2}
   \end{minipage}
\end{figure}
and obtain $ \Gamma(\prod\limits_{i=1}^4 \bF_{q_i})$ as a $14$-partite graph. A rough drawing of $ \Gamma(\prod\limits_{i=1}^4 \bF_{q_i})$ is shown in Figure \ref{Fig:1}, where an edge between two partite sets means that each vertex of one set is adjacent to every vertex of the other. If we replace every non-zero entry of a vertex in the above partite sets with $1$, we obtain the partite truncation of the graph $ \Gamma(\prod\limits_{i=1}^4 \bF_{q_i})$, which is isomorphic to $\Gamma((\Z_2)^4)$. A minimal $L(2,1)$-labeling  of the zero-divisor graph  $\Gamma((\Z_2)^4)$ is shown in Figure 
\ref{Fig:2}. If we suppose that the labeling is minimal, then any other labeling with a shorter span would imply  that it has more than one multiplicity, which is impossible to achieve. Thus, 
$\lambda(\Gamma((\Z_2)^4))=10.$ Given the minimal labeling (as shown in Figure \ref{Fig:2} ) on the partite truncation of $ \Gamma(\prod\limits_{i=1}^4 \bF_{q_i})$,  we obtain the equivalence relation  on the partite sets of $ \Gamma(\prod\limits_{i=1}^4 \bF_{q_i})$ as discussed in Remark~\ref{r}. The representatives of the equivalence classes are given as, $\mathcal{C}= \big\{ \V_1, \V_2, \V_3, \V_4, \V_{12}, \V_{13}, \V_{14}, \V_{23}, \V_{24}, \V_{34}, \V_{234} \big\}.$ Therefore, by Theorem~\ref{b}, 
 \begin{eqnarray*}
 \lambda(\Gamma(\prod_{i=1}^4 \bF_{q_i})) & = & \sum\limits_{ \V \in \mathcal{C}} |\V| + \lambda( \Gamma((\Z_2)^4)) - |\mathcal{C}|\\
 &=& \sum\limits_{i=1}^4(q_i -1) + \sum\limits_{ 1 \leq i<j \leq 4}\big((q_i -1)(q_j -1) \big) + (q_2 - 1)(q_3-1)(q_4-1) -1.
 \end{eqnarray*}
  \end{exm}
It would be very interesting to determine the $\lambda$-number and its corresponding minimal $L(2, 1)$-labeling of the graph $\Gamma(\prod\limits_{i=1}^n \bF_{q_i})$. So, we conclude the paper with the following open problem.
 \begin{problem}
 Determine $\lambda$-number of the graph $\Gamma(\prod\limits^n(\Z_2))$ and its corresponding minimal $L(2,1)$-labeling.
 \end{problem} 
 
{\bf Conclusion}

In this research article, we studied partite truncation, a graph operation that aids to obtain a smaller order graph from relatively larger order $n$-partite graph. This contraction of the order allow us to determine the $\lambda$-number and  corresponding minimal labeling of the $n$-partite graph from the contracted graph. We exhibited that the zero-divisor graph associated with a Boolean ring is a partite truncated graph of the zero-divisor graph realized by a reduced ring.  We also  proved an interesting result that illustrates the deduction of $\lambda$-number of Beck's zero-divisor graph $\Gamma^{\prime}(R)$ from $\lambda$-number of the zero-divisor graph $\Gamma(R)$. We concluded this paper with an open problem for future research work.

{\bf Acknowledgement}
 
The first author's research is  supported by the University Grants Commission, Govt. of India under UGC-Ref. No. 191620023547, and the second author’s research work is funded by the Department of Atomic Energy, Govt. of India under S.No. 02011/15/2023NBHM(R.P)/R\&D II/5866.
 
\textbf{Declaration of competing interest}

There is no conflict of interest to declare.

\textbf{Data Availability}

Data sharing not applicable to this article as no datasets were generated or analysed during the current study.

\end{document}